\theoremstyle{plain}
\newcommand{\R}{\mathbb{R}}
\newcommand{\C}{\mathbb{C}}
\newcommand{\N}{\mathbb{N}}
\newcommand{\Z}{\mathbb{Z}}
\newcommand{\F}{\mathcal{F}}
\newcommand{\B}{\mathcal{B}}
\newcommand{\X}{\mathcal{X}}
\newcommand{\D}{\mathcal{D}}
\newcommand{\sing}{\mathrm{Sing}}
\newcommand{\ord}{\text{ord}}
\newcommand{\cl}[1]{\mathcal{#1}}
\newtheorem{maintheorem}{Theorem}
\newtheorem{Btheorem}{Theorem}
\newtheorem{secondtheorem}{Theorem}
\newtheorem{theorem}{Theorem}[section]
\newtheorem{lemma}[theorem]{Lemma}
\newtheorem{proposition}[theorem]{Proposition}
\newtheorem{corollary}[theorem]{Corollary}
\newtheorem{example}[theorem]{Example}
\newtheorem{remark}[theorem]{Remark}
\begin{document}

\title{On Brian\c{c}on-Skoda theorem for foliations}

\date{\today}
\author[A. Fern\'andez-P\'erez]{Arturo Fern\'andez-P\'erez}
\address[Arturo Fern\'{a}ndez P\'erez] {Department of Mathematics. Federal University of Minas Gerais. Av. Ant\^onio Carlos, 6627 
CEP 31270-901\\
Pampulha - Belo Horizonte - Brazil\\}
\email{fernandez@ufmg.br}

\author[E. R.  Garc\'{i}a Barroso]{Evelia R. Garc\'{i}a Barroso}
\address[Evelia R. Garc\'{i}a Barroso]{Dpto. Matem\'{a}ticas, Estad\'{\i}stica e Investigaci\'on Operativa\\
Secci\'on de Matem\'aticas\\
Universidad de La Laguna. Apartado de Correos 456. 38200 La Laguna, Tenerife, Spain.}
\email{ergarcia@ull.es}

\author[N. Saravia-Molina]{Nancy Saravia-Molina}
\address[Nancy Saravia-Molina]{Dpto. Ciencias - Secci\'{o}n Matem\'{a}ticas, Pontificia Universidad Cat\'{o}lica del Per\'{u}, Av. Universitaria 1801,
San Miguel, Lima 32, Peru}
\email{nsaraviam@pucp.pe}

\subjclass[2010]{Primary 32S65 - Secondary 32S10}
\thanks{The first-named and third-named authors were partially supported by PUCP-PERU \textsc{dgi}: 2020-3-0014. The first-named author was  partially supported by FAPERJ-Pronex E-26/010.001270/2016 and CNPq PQ-302790/2019-5.  The second-named and third-named authors were partially supported by the Pontificia Universidad Cat\'{o}lica del Per\'{u} project CAP 2019-ID 717. The first-named and second-named author was supported by the grant PID2019-105896GB-I00 funded by MCIN/AEI/10.13039/501100011033.
}
\keywords{holomorphic foliations, Brian\c{c}on-Skoda theorem, Milnor number, Tjurina number.}

\begin{abstract}
We generalize Mattei's result relative to the Brian\c{c}on-Skoda theorem for foliations to the family of foliations of the second type. We use this generalization to establish relationships between the Milnor and Tjurina numbers of foliations of second type, inspired by the results obtained by Liu for complex hypersurfaces and we determine a lower bound for the global Tjurina number of an algebraic curve.
\end{abstract}

\maketitle

\section{Introduction and Statement of the results}

The problem of deciding whether an element of a ring  belongs to a given
ideal of the ring is known as the {\it ideal membership} and dates back to works of Dedekind who gave the precised definition of an ideal. Even if we know generators of the ideal, it is not trivial to determine if an element is a member of it. Therefore it is interesting to give sufficient conditions for ideal membership. An important theorem in this line is the Hilbert’s Nullstellensatz: it states that if $I$ in an ideal in the ring of germs of holomorphic functions at $0\in \mathbb C^n$ and $f$ vanishes on the zero locus of $I$ then there is a power of $f$ belonging to $I$. The {\em Brian\c{c}on-Skoda Theorem} can be seen as an {\it effective} version of the Hilbert Nullstellensatz when $I$ is a jacobian ideal. Let us clarify this last statement.
Let $f(x_{1},\ldots,x_{n})\in \C\{x_{1},\ldots,x_{n}\}$ be a non-unit convergent power series. Consider  its jacobian ideal $J(f)=(\partial_{x_{1}}f,\ldots, \partial_{x_{n}}f)$.
According to Wall \cite{Wall} it was Mather who asked about the smallest $r$ for which
$f^{r}\in J(f)$. It was known then that $f$ is an element of the integral closure of $J(f)$, which implies the existence of a power of $f$ belonging to $J(f)$. At that time it was also known, thanks to Saito \cite{Saito}, that if the origin is an isolated critical point of $f$ then $f$ belongs to $J(f)$ iff $f$ is a quasi-homogeneous polynomial. Brian\c con and Skoda \cite{Briancon-Skoda} proved, using analytic results of Skoda, that $f^{n}\in J(f)$. Later, Lipman and Teissier \cite{Lipman-Teissier} gave an algebraic proof of this algebraic statement. 
Subsequently, Brian\c con-Skoda Theorem has been generalized in different contexts, and has given rise to abundant literature. In Foliation Theory, Mattei  proved

\begin{Btheorem}(\cite[Th\'eor\`eme C]{Mattei}) \label{Thi:}
Let $\F$ be a non-dicritical  generalized curve holomorphic foliation at $(\C^2,p)$ given by  $\omega=P(x,y)dx + Q(x,y)dy$. If $f(x,y)=0$ is the reduced curve of total union of separatrices of $\F$ then  $f^2$ belongs to the ideal $(P,Q)$.
\end{Btheorem}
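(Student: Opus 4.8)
The plan is to deduce the statement from the two--variable Brian\c{c}on--Skoda theorem together with the fact that, for a generalized curve, the foliation ideal and the Jacobian ideal of the separatrix have the same integral closure. Write $R=\C\{x,y\}$, $I=(P,Q)$ and $J(f)=(f_x,f_y)$, and let $\overline{(\ )}$ denote integral closure. The first move is a reduction: it suffices to prove that $f\in\overline{I}$. Indeed, the classical Brian\c{c}on--Skoda theorem in dimension $2$ gives $\overline{I^{2}}\subseteq I$, while integral closure is multiplicative ($\overline{I}\cdot\overline{I}\subseteq\overline{I^{2}}$); hence $f\in\overline I$ forces $f^{2}\in\overline{I}\cdot\overline{I}\subseteq\overline{I^{2}}\subseteq I$, which is exactly the assertion $f^{2}\in(P,Q)$.

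To obtain $f\in\overline I$ I would first record the elementary fact that $f\in\overline{J(f)}$ for every $f$: along any analytic arc $\gamma(t)$ one has $\ord_t\big(\tfrac{d}{dt}(f\circ\gamma)\big)=\ord_t(f\circ\gamma)-1$ and $\tfrac{d}{dt}(f\circ\gamma)=(f_x\circ\gamma)x'+(f_y\circ\gamma)y'$, so $\ord_t(f\circ\gamma)\ge 1+\min\big(\ord_t(f_x\circ\gamma),\ord_t(f_y\circ\gamma)\big)$, and the valuative criterion yields $f\in\overline{J(f)}$. Thus the whole problem reduces to the equality of integral closures $\overline I=\overline{J(f)}$.

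For this I would compare the two ideals through Hilbert--Samuel multiplicities. Since the origin is an isolated singularity, both $I=(P,Q)$ and $J(f)$ are $\mathfrak m$--primary parameter ideals (two generators in the regular $2$--dimensional ring $R$); for parameter ideals in a Cohen--Macaulay ring the multiplicity equals the colength, so $e(I)=\dim_{\C}R/I=\mu(\F)$ and $e(J(f))=\dim_{\C}R/J(f)=\mu(f)$. Now the hypothesis that $\F$ is a non-dicritical generalized curve enters through the Camacho--Lins Neto--Sad theorem $\mu(\F)=\mu(f)$, giving $e(I)=e(J(f))$. If in addition one establishes a single inclusion, say $I\subseteq\overline{J(f)}$, then Rees' multiplicity theorem (two $\mathfrak m$--primary ideals, one contained in the integral closure of the other and with equal multiplicity, have the same integral closure) yields $\overline I=\overline{J(f)}$, and combined with $f\in\overline{J(f)}$ we conclude $f\in\overline I$.

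The main obstacle is precisely this remaining inclusion $I\subseteq\overline{J(f)}$ (equivalently $P,Q\in\overline{J(f)}$), to be verified by the divisorial criterion on the common reduction of singularities $\pi$ of $\F$ and of $f$: non-dicriticality of a generalized curve guarantees that the desingularizations of $\F$ and of its total separatrix coincide and that every exceptional component is invariant, so the vanishing order of $(P,Q)$ along each exceptional divisor $E_i$ can be read off from the separatrix. At the first blow-up this is the equality $\nu(\F)=\nu(f)-1$ of algebraic multiplicities, and the generalized curve (no saddle-node) condition propagates the local relation $\nu_p(\F)=\nu_p(S)-1$ through every infinitely near point $p$, where $\nu_p(S)$ is the multiplicity there of the strict transform of the separatrix; this is what forces $v_{E_i}(I)=v_{E_i}(J(f))$ and hence the desired inclusion. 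The delicate point to check carefully is that these exceptional valuations suffice to test membership in $\overline{J(f)}$ --- i.e. that the comparison captures the Rees valuations of the ideals --- and it is here that the absence of saddle-nodes and of dicritical components is indispensable, since either phenomenon would break the multiplicity relation and hence the equality of integral closures.
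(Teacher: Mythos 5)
Your overall strategy is genuinely different from the paper's: the paper deduces Theorem \ref{Thi:} as a special case of its Theorem A, whose proof is sheaf-cohomological (Genzmer's exact sequence for second-type foliations, the vanishing of multiplication by $f\circ\pi$ on $H^{1}(\D,\X_{\tilde{\F}})$, and a Hartogs extension), whereas you argue algebraically through integral closures. Several of your steps are correct and nicely economical: the reduction of $f^{2}\in(P,Q)$ to $f\in\overline{(P,Q)}$ via $\overline{I}\cdot\overline{I}\subseteq\overline{I^{2}}\subseteq I$ (classical Brian\c{c}on--Skoda in a two-dimensional regular local ring), the valuative proof that $f\in\overline{J(f)}$, the identification of multiplicities with colengths for the parameter ideals $(P,Q)$ and $(f_x,f_y)$, and the use of the Camacho--Lins Neto--Sad equality $\mu_p(\F)=\mu_p(f)$ to get $e(I)=e(J(f))$. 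The application of Rees' theorem is also correctly set up, \emph{conditional} on the inclusion $I\subseteq\overline{J(f)}$.

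That inclusion, however, is exactly where the proof is missing, and it is the hardest point of the whole argument rather than a routine verification. You propose to check $v_{E_i}(P),v_{E_i}(Q)\geq v_{E_i}(J(f))$ only for the exceptional divisors $E_i$ of the reduction of singularities of $\F$ (equivalently, of $f$), using the relation $\nu_q(\F)=\nu_q(\tilde f)-1$ at infinitely near points. But membership in $\overline{J(f)}$ is tested by the Rees valuations of $J(f)$, i.e.\ by the divisors of a log resolution (principalization) of the Jacobian ideal; these are governed by the singularities of the generic polar curve of $f$, whose resolution in general requires blow-ups beyond those needed to desingularize $f$ itself. So the exceptional valuations of $\pi$ need not suffice to certify $P,Q\in\overline{J(f)}$, and no argument is given that they do --- you flag this yourself as ``the delicate point to check carefully.'' In effect the proposal reduces Mattei's theorem to the unproved assertion $\overline{(P,Q)}=\overline{(f_x,f_y)}$ for non-dicritical generalized curves, which is a statement of comparable depth (it encodes a comparison of the polar curve of $\F$ with the polar curve of $f$ along \emph{all} arcs, not just along the separatrices, where the known polar-excess results apply). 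As written, the proof is therefore incomplete at its decisive step.
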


In this paper, we extend Theorem \ref{Thi:} to the family of second type foliations (perhaps dicritical) and show (see Example \ref{Fk}) that it is essential that the foliation be of the second type.

\begin{maintheorem}\label{th:B-Sth}
Let $\F$ be a germ of a second type holomorphic foliation at $(\C^2,p)$ induced by $\omega=P(x,y)dx+Q(x,y)dy$, where $P,Q\in\C\{x,y\}$, and let $F=f/h$ be a reduced balanced equation of separatrices for $\F$. Then $f^2$ belongs to ideal $(P,Q)$.  
\end{maintheorem}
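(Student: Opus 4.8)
The plan is to deduce the statement from a single integral-closure membership together with the classical Brian\c{c}on--Skoda theorem. Since $\C\{x,y\}$ is a two-dimensional regular local ring, Brian\c{c}on--Skoda gives $\overline{(P,Q)^2}\subseteq(P,Q)$, while integral closure is multiplicative up to inclusion, so $\overline{(P,Q)}^{\,2}\subseteq\overline{(P,Q)^2}$. Hence the whole theorem reduces to proving
\[
f\in\overline{(P,Q)},
\]
for then $f^2\in\overline{(P,Q)}^{\,2}\subseteq\overline{(P,Q)^2}\subseteq(P,Q)$. In other words, it is enough to show that the numerator of the balanced equation of separatrices lies in the integral closure of the singular ideal of $\F$.

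To establish $f\in\overline{(P,Q)}$ I would use the valuative description of integral closure through a resolution. Let $\pi\colon(M,\mathcal D)\to(\C^2,p)$ be a reduction of singularities of $\F$, refined if necessary so that it is simultaneously a log-resolution of the ideal $(P,Q)$ and of the divisor of $F=f/h$; write $\mathcal D=\bigcup_i D_i$ for the exceptional components. In dimension two the integral closure of an ideal is detected by the divisorial valuations attached to the components of such a common resolution (the Rees valuations of $(P,Q)$ occur among the $D_i$), so $f\in\overline{(P,Q)}$ is equivalent to the family of inequalities
\[
\nu_{D_i}\!\left(\pi^{*}f\right)\ \ge\ \nu_{D_i}\!\left(\pi^{*}(P,Q)\right)\qquad\text{for every }i,
\]
where $\nu_{D_i}$ denotes order of vanishing along $D_i$; the inequality is automatic along the strict transform of $\{f=0\}$ and along components met by neither $f$ nor $(P,Q)$.

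The core of the argument is to compute both sides of this inequality on each $D_i$ and to see that the second type hypothesis forces it. On the ideal side, a local computation in a chart adapted to $D_i$ relates $\nu_{D_i}(\pi^{*}(P,Q))$ to the vanishing order of the pulled-back form $\pi^{*}\omega$ along $D_i$, the two differing by an amount governed by the Jacobian of $\pi$ and by whether $D_i$ is $\F$-invariant or dicritical; this dicritical correction is precisely what the denominator $h$ of the balanced equation is designed to absorb. On the separatrix side, the balancing property of $F=f/h$ computes $\nu_{D_i}(\pi^{*}f)$ and $\nu_{D_i}(\pi^{*}h)$ from the local invariants of $\tilde\F=\pi^{*}\F$ along $D_i$. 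The decisive point is that for a foliation of the second type every singularity of $\tilde\F$ is either a simple (reduced) singularity or a saddle-node whose weak separatrix is transverse to $\mathcal D$; in both local models the separatrices through the point account for the \emph{full} multiplicity of the foliation, so the contributions of $\pi^{*}F$ match those of $\pi^{*}\omega$ along each $D_i$. Combining the two computations yields $\nu_{D_i}(\pi^{*}f)\ge\nu_{D_i}(\pi^{*}(P,Q))$ on every component.

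The hard part, and the place where the hypothesis is indispensable, is exactly this per-divisor comparison at the saddle-node singularities produced by the resolution. If $\tilde\F$ carried a \emph{tangent} saddle-node (weak separatrix contained in $\mathcal D$), its weak separatrix would be invisible in $f$, so $\pi^{*}f$ would fail to carry the extra vanishing order created by $\pi^{*}\omega$ at that point and the inequality would break down; this is the mechanism underlying Example \ref{Fk}, which shows that ``second type'' cannot be dropped. Thus the proof splits into three stages: (i) the Brian\c{c}on--Skoda reduction to integral closure, which is formal; (ii) the valuative reduction to the per-divisor multiplicity inequalities, which is standard two-dimensional integral-closure theory; and (iii) the comparison on each $D_i$, whose only nontrivial inputs are the classification of the local models of $\tilde\F$ guaranteed by the second type condition and the balancing property of $F=f/h$.
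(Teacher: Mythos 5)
Your overall strategy is genuinely different from the paper's: the paper follows Mattei's cohomological scheme, using Genzmer's exact sequence $0\to \X_{\tilde{\F}}\to\X_{Z_0}\to\mathcal O(-Z_\infty)\to 0$ to produce local vector fields $X_i$ with $\pi^*(\omega)(X_i)=f\circ\pi$, and then killing the obstruction cocycle $X_i-X_j$ in $H^1(\D,\X_{\tilde\F})$ by showing that multiplication by $f\circ\pi$ is the zero map on that cohomology group (Lemma \ref{lemma_1} and Proposition \ref{prop_1}); the glued field $\tilde v$ pushes down by Hartogs and yields $f^2=\pi^*(\omega)(\tilde v)\circ\pi^{-1}\in(P,Q)$. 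Your steps (i) and (ii) are correct and standard: since $(P,Q)$ is a two-generated (and $\mathfrak m$-primary) ideal of the two-dimensional regular local ring $\C\{x,y\}$, Brian\c{c}on--Skoda gives $\overline{(P,Q)^2}\subseteq(P,Q)$, and $f\in\overline{(P,Q)}$ would indeed imply $f^2\in\overline{(P,Q)}^2\subseteq\overline{(P,Q)^2}\subseteq(P,Q)$; likewise the reduction of $f\in\overline{(P,Q)}$ to the per-divisor inequalities $\nu_{D_i}(\pi^*f)\ge\nu_{D_i}(\pi^*(P,Q))$ on a log-resolution of $(P,Q)$ is legitimate two-dimensional integral-closure theory.

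The gap is in step (iii), which is where all the mathematical content lives. You assert that ``the contributions of $\pi^*F$ match those of $\pi^*\omega$ along each $D_i$,'' but you never compute $\nu_{D_i}(\pi^*(P,Q))$, and this quantity does not propagate through the resolution the way your sketch suggests. After one blow-up $x=x_1$, $y=x_1y_1$, the strict transform foliation is given by coefficients $P_1=x_1^{-\nu-\epsilon}(\pi^*P+y_1\pi^*Q)$ and $Q_1=x_1^{1-\nu-\epsilon}\pi^*Q$, so the ideal $(P_1,Q_1)$ at an infinitely near point is \emph{not} the strict transform of $(P,Q)$; consequently $\nu_{D_i}(\pi^*(P,Q))$ for $i\ge 2$ is governed by the behaviour of the generic polar curve $aP+bQ$ under resolution (its multiplicities at infinitely near points, i.e.\ the polar invariants studied by Genzmer--Mol), not by the local models of $\tilde\F$ at its reduced singularities. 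Your appeal to the classification of simple singularities and non-tangent saddle-nodes addresses only the final, reduced stage and says nothing about the accumulation of multiplicity along intermediate non-reduced infinitely near points, which is exactly where the inequality could fail. Note also that the intermediate statement $f\in\overline{(P,Q)}$ is strictly stronger-looking than the conclusion: $f^2\in(P,Q)$ does not imply $f\in\overline{(P,Q)}$ (an integral dependence equation of degree $2$ would require $f^2\in(P,Q)^2$), so you cannot expect this membership to come for free and it requires a genuine proof --- one that, as far as the known literature goes, would itself need the polar-curve machinery of \cite{Genzmer-Mol} or an argument of the cohomological type the paper actually uses. As written, the proposal reduces the theorem to an unproved (and nontrivial) claim.
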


In Section \ref{sec:preliminaries} we introduce  all the notions and tools necessary to prove Theorem \ref{th:B-Sth}. We are inspired by Mattei's proof but to extend it to the dicritical case we use the characterizations of the dicritical second type foliations given by Genzmer in \cite{genzmer}. The proof of Theorem \ref{th:B-Sth} is given in Section \ref{sec: proof}. In Section \ref{sec:MT}, we obtain relationships between the Milnor number, $\mu_p(\F)$, and the Tjurina number, $\tau_p(\F,\B_0)$, of the foliation $\F$ with respect to the zero divisor $\B_0$ of a balanced divisor of separatrices $\B=\B_0-\B_{\infty}$ of $\F$, inspiring us to do so in the work of Liu \cite{Liu} for complex hypersurfaces. More precisely, if $\mathcal{P}^{\F}$ is a generic polar curve of $\F$, $\nu_p(.)$ denotes the algebraic multiplicity of a curve and $i_p(.,.)$ denotes the intersection multiplicity of two curves then we get

\begin{secondtheorem}\label{cota}
Let $\F$ be a singular holomorphic foliation of second type at $(\C^2,p)$. Let $\B=\B_0-\B_{\infty}$ be a balanced divisor of separatrices for $\F$. Then 
\begin{equation}\label{eq:cota}
\frac{(\nu_p(\B_0)-1)^2+\nu_p(\B_{\infty})-i_p(\mathcal{P}^{\F},\B_{\infty})-i_p(\B_0,\B_{\infty})}{2} \stackrel{(*)}\leq \frac{\mu_p(\F)}{2} \leq \tau_p(\F,\B_0),
\end{equation} 
and the equality $(*)$ holds if $\F$ is a  generalized curve foliation and $\B_0$ is defined by a germ of semi-homogeneous function at $p$. 
Moreover, if $\B_{\infty}=\emptyset$, then 
\[\frac{\nu_p(\F)^2}{2}\leq \frac{\mu_p(\F)}{2} \leq \tau_p(\F,\B_0).\]
\end{secondtheorem}
\par Finally, as consequence of Theorem \ref{cota}, in Section \ref{global_tjurina}, we obtain a lower bound for the global Tjurina number of an algebraic curve.

 \section{Preliminaries} 
 \label{sec:preliminaries}

 \par Let $\F$ be a germ of singular holomorphic foliation at $(\C^2,p)$, in local coordinates $(x,y)$ centered at $p$, the foliation is given by a holomorphic 1-form
\begin{equation}
\label{oneform}
\omega=P(x,y)dx+Q(x,y)dy,
\end{equation}
or by its dual vector field
\begin{equation}
\label{vectorfield}
v = -Q(x,y)\frac{\partial}{\partial{x}} + P(x,y)\frac{\partial}{\partial{y}},
\end{equation}
where  $P(x,y), Q(x,y)   \in {\mathbb C\{x,y\}}$ are relatively prime, where $\mathbb C\{x,y\}$ is the ring of complex convergent power series in two variables. 
 The \textit{algebraic multiplicity} of $\F$, denoted by $\nu_p(\F)$, is the minimum of the orders $\nu_p(P)$, $\nu_p(Q)$ at $p$ of the coefficients of $\omega$. 
\par We say that $C: f(x,y)=0$, with  $f(x,y)\in  \mathbb{C}\{x,y\}$,  is an $\F$-\emph{invariant} curve if $$\omega \wedge d f=(f.h) dx \wedge dy,$$ for some  $h\in \mathbb{C}\{x,y\}$.  A {\em separatrix} of $\F$ is an irreducible $\F$-invariant curve.  Denote by $Sep_p(\F)$ the set of all separatrices of $\F$ through $p$. If $Sep_p(\F)$ is a finite set then we  say that the foliation $\F$ is {\em non-dicritical} and we call {\it total union of separatrices} of $\F$ to the union of all elements of  $Sep_p(\F)$. Otherwise we will say that $\F$ is a {\em dicritical} foliation.

A point $p \in \C^{2}$  is a {\em reduced} or {\em simple} singularity for $\F$  if the
 linear part ${\rm D} v(p)$ of the vector field $v$ in \eqref{vectorfield}  is non-zero and  has eigenvalues $\lambda_1, \lambda_2 \in \C$ fitting in one of the two following cases:
\begin{enumerate}
\item  $\lambda_1 \lambda_2 \neq 0$ and $\lambda_1 / \lambda_2 \not \in \mathbb{Q}^+$  
(in which case we say that $p$ is a {\em non-degenerate} or {\em complex hyperbolic} singularity). \smallskip
\item $\lambda_1 \neq 0$ and  $\lambda_2= 0$ \ (in which case we say that $p$ is a {\em saddle-node} singularity).
\end{enumerate}

The reduction process of the singularities of a codimension one singular foliation over an ambient space of dimension two was achieved by Seidenberg  \cite{seidenberg}. 

A singular foliation $\F$ at $(\C^2,p)$ is a \textit{generalized curve foliation} if it has no saddle-nodes in its reduction process of singularities. This concept was defined by Camacho-Lins Neto-Sad \cite[Page 144]{CLS}. In this case, there is a system of coordinates  $(x,y)$ in which $\F$ is induced by the equation
\begin{equation}
\label{non-degenerate}
\omega=x(\lambda_1+a(x,y))dy-y(\lambda_2+b(x,y))dx,
\end{equation}
where $a(x,y),b(x,y)  \in {\mathbb C}\{x,y\}$ are non-units, so that  $Sep_p(\F)$ is formed by two
transversal analytic branches given by $\{x=0\}$ and $\{y=0\}$. In the case (2), up to a formal change of coordinates, the  saddle-node singularity is given by a 1-form of the type
\begin{equation}
\label{saddle-node-formal}
\omega = x^{k+1} dy-y(1 + \lambda x^{k})dx,
\end{equation}
where $\lambda \in \mathbb{C}$ and $k \in \mathbb{Z}_{>0}$ are invariants after formal changes of coordinates (see \cite[Proposition 4.3]{martinetramis}).
The curve $\{x=0\}$   is an analytic separatrix, called {\em strong}, whereas $\{y=0\}$  corresponds to a possibly formal separatrix, called {\em weak} or {\em central}. 

\par Let $\F$ be a  foliation at $(\C^2,p)$, given by a $1$-form as in (\ref{oneform}), with reduction process $\pi:(\tilde{X},\D)\to (\C^{2},p)$ and let $\tilde{\F} = \pi^{*} \F$ be the strict transform of  $\F$. Denote by $ \sing(\cdot)$ the set of singularities of a foliation.
A saddle-node  singularity $q \in \sing(\tilde{\F})$ 
is said to be a \textit{tangent saddle-node} if  its   weak separatrix is contained in the exceptional divisor $\D$, that is, the weak separatrix is an irreducible component of $\D$.

\par  A foliation   is  \textit{in the second class} or is \textit{of second type} if there
are no    tangent saddle-nodes in its reduction process of singularities. This notion was studied   by Mattei-Salem \cite{mattei} in the non-dicritical case and by Genzmer \cite{genzmer} for arbitrary foliations.\\

\par For a fixed reduction process of singularities $\pi:(\tilde{X},\D)\to(\C^{2},p)$ for $\F$, a component  $D \subset \D$ can be:
\begin{itemize}
\item {\em non-dicritical}, if $D$ is $\tilde{\F}$-invariant. In this case, $D$ contains a finite number of simple singularities. Each non-corner singularity of $D$ carries a separatrix   transversal to $D$, whose projection by $\pi$ is a curve in $Sep_{p}(\F)$. Remember that a corner singularity of $D$ is an intersection point of $D$ with other irreducible component of $\D$.
\item {\em dicritical}, if $D$ is not $\tilde{\F}$-invariant. The reduction process
of singularities gives that $D$ may intersect only non-dicritical components of $\D$ and  $\tilde{\F}$ is everywhere transverse to $D$. The $\pi$-image of a local leaf of $\tilde{\F}$ at each non-corner point of $D$ belongs to $Sep_{p}(\F)$.
\end{itemize}

\par Denote by   $Sep_{p}(D) \subset Sep_{p}(\F)$ the set of separatrices whose strict transforms
 by $\pi$ intersect the
component $D \subset \D$. If $B \in Sep_{p}(D)$ with $D$ non-dicritical, $B$ is said to be \textit{isolated}. Otherwise, it is said to be a \textit{dicritical separatrix}.
This determines the   decomposition $Sep_{p}(\cl{F}) = Iso_{p}(\F) \cup Dic_{p}(\F)$, where notations are self-evident.
The set $Iso_{p}(\F)$  is finite and  contains all   purely
formal separatrices. It   subdivides further    in two classes:
 \textit{weak} separatrices --- those arising from the weak separatrices of saddle-nodes --- and \textit{strong} separatrices --- corresponding to strong separatrices
of saddle-nodes and separatrices of non-degenerate singularities. On the other hand, if  $Dic_{p}(\F)$ is non-empty  then it is an infinite set of analytic separatrices.
Observe that a foliation  $\F$ is {\em  dicritical}
  when $Sep_{p}(\F)$ is infinite, which is equivalent to saying that $Dic_{p}(\F)$ is non-empty. Otherwise, $\F$ is {\em non-dicritical}.

Throughout the text, we would rather adopt the language of \textit{divisors} of formal curves.
More specifically, a \textit{divisor of separatrices} for a foliation $\F$ at $(\C^2,p)$ is
a formal sum
\begin{equation}\label{divisor}
\B = \sum_{B \in \text{Sep}_{p}(\F)} a_{B} \cdot B, 
\end{equation}
where the coefficients $a_{B} \in \mathbb{Z}$ are zero except for finitely many $B \in Sep_{p}(\F)$. The set of separatrices $\{B\;:\; a_{B}\neq 0\}$ appearing in \eqref{divisor} is called the \emph{support} of the divisor $\B$ and it is denoted by $\hbox{\rm supp}(\B)$. The \emph{degree} of the divisor $\B$ is by definition $\deg \B=\sum_{B\in \hbox{\rm supp}(\B) }a_{B}$.
We denote by $Div_{p}(\F)$ the set of all these divisors of separatrices, which turns into a group with the canonical additive structure.
We follow  the usual terminology and notation:
\begin{itemize}
\item $\B \geq 0$ denotes an \textit{effective} divisor, one whose  coefficients are all  non-negative;
\item   there is a unique decomposition $\B = \B_{0} - \B_{\infty}$, where $\B_{0}, \B_{\infty} \geq 0$ are respectively the \textit{zero}
and \textit{pole} divisors of $\B$;
\item the \textit{algebraic multiplicity} of   $\B$ is
$\nu_{p}(\B)=\displaystyle\sum_{B \in \hbox{\rm supp}(\B) } \nu_{p}(B).$
\end{itemize}

Following  \cite[page 5]{genzmer} and \cite[Definition 3.1]{Genzmer-Mol}, we define  a \emph{balanced divisor of separatrices}  for $\F$  as a divisor of the form

\[ \B \ = \
\sum_{B\in {\rm Iso}_p(\F)} B+ \sum_{B\in {\rm Dic}_{p}(\F)}\ a_{B}  \cdot B,
\]

\noindent where the coefficients $a_{B} \in \mathbb{Z}$  are  non-zero except for finitely many $B \in
Dic_{p}(\F)$, and, for each  dicritical  component $D \subset \D$,
 the following equality is respected:
\[\sum_{B \in {\text{Dic}_{p}(D)}}a_{B} = 2- val(D).\]

  The integer $val(D)$ stands for the {\em valence} of a component $D \subset \D$ in the reduction process of singularities, that is,  it is the   number of  components of $\D$ intersecting $D$ other from $D$ itself.

The notion of balanced divisor of separatrices  generalizes, to dicritical foliations, the notion of total union of separatrices for non-dicritical foliations.

 A balanced divisor $\B=\sum_{B}a_{B}B$ of separatrices of $\F$ is called \textit{primitive} if, $a_{B}\in \{-1,1\}$ for any $B\in \hbox{\rm supp}(\B)$.
A \textit{balanced equation of separatrices} is a formal meromorphic function ${F}(x,y)$ whose associated divisor $\B=\B_0-\B_{\infty}$ is a balanced divisor. A balanced equation is \textit{reduced} or \textit{primitive} if the same is true for the underlying divisor.

\par By  \cite[Proposition 2.4]{genzmer} we have 
\begin{equation}\label{G-nst}
\nu_{p}(\F)=\nu_{p}(\B)-1+\xi_p(\F)
\end{equation} and
\begin{equation}\label{genzmer:1}
\hbox{\rm $\F$ is a second type foliation  if, and only if, $\;\nu_{p}(\F)=\nu_{p}(\B)-1,\;\;\;$}
\end{equation}

\noindent where $\B$ is a balanced divisor of separatrices for $\F$ and $\xi_p(\F)$ is the tangency excess of $\F$ at $p$ (see \cite[Definition 2.3]{FP-GB-SM}).

 \section{Proof of Theorem \ref{B-Sth}}
 \label{sec: proof}

Let $\F$ be a germ of a singular holomorphic foliation at $(\C^2,p)$ induced by $\omega:=P(x,y)dx+Q(x,y)dy$, where $P,Q\in\C\{x,y\}$, and consider a blow-up $\sigma:(\tilde{\mathbb{C}}^2,\tilde{D})\to (\C^2,p)$ centered at $p$ with $\tilde{D}=\sigma^{-1}(p)$ and let $\tilde{\F}=\sigma^{*}(\F)$ be the strict transform of $\F$ by $\sigma$. 
  Let $\X_{\tilde{\F}}$ be the sheaf of (holomorphic) vector fields tangent to $\tilde{\F}$ and $H^{1}(\tilde{D},\X_{\tilde{\F}})$ the first cohomology group of $\X_{\tilde{\F}}$ on $\tilde{D}$·
 
\par We have the following lemma, which generalizes \cite[Lemme 2.2.1]{Mattei_inventions} to dicritical blow-ups.
\begin{lemma}\label{lemma_1}

With the above notation, we have 
\[\dim H^{1}(\tilde{D},\X_{\tilde{\F}})=\frac{(\nu_p(\F)-\epsilon_p(\F)-1)(\nu_p(\F)-\epsilon_p(\F)-2)}{2},\]
where 
\begin{equation*}
\epsilon_p(\F)=
\begin{cases}
0 & \text{if $\sigma$ is non-dicritical}
\medskip \\
1 & \text{if $\sigma$ is dicritical}.
\end{cases}
\end{equation*}
\end{lemma}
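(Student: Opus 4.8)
The plan is to compute the group by a \v{C}ech computation on the two standard affine charts of the single blow-up $\sigma$, which together cover a neighbourhood $W$ of $\tilde{D}\cong\mathbb{P}^1$. On each chart $\X_{\tilde{\F}}$ is a free rank-one $\mathcal{O}$-module, generated by the saturated vector field $\tilde{v}_i$ tangent to $\tilde{\F}$ obtained from $\sigma^{*}v$ by dividing out the maximal power of the exceptional divisor. Since $U_0,U_1$ and $U_0\cap U_1$ are Stein, the cover is a Leray cover for this invertible sheaf, so \v{C}ech cohomology computes $H^1$ and collapses it to a single cokernel
\[
H^{1}(\tilde{D},\X_{\tilde{\F}})\;\cong\;\mathcal{O}(U_0\cap U_1)\;\big/\;\big(\mathcal{O}(U_0)+\varphi\cdot\mathcal{O}(U_1)\big),
\]
where $\varphi$ is the transition cocycle of $\X_{\tilde{\F}}$, i.e.\ $\tilde{v}_0=\varphi\,\tilde{v}_1$ on the overlap.

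Next I would make the cokernel explicit by expanding every function in powers of the equation of $\tilde{D}$. Because $\tilde{D}^2=-1$, the conormal is $\mathcal{O}(1)$, and since $\varphi$ is a monomial in the chart coordinate, multiplication by $\varphi$ preserves the grading by order along $\tilde{D}$; hence the cokernel genuinely splits as a direct sum of its graded pieces, each of which is the cohomology of a line bundle on $\mathbb{P}^1$. Counting, in each graded degree, the Laurent monomials in the $\tilde{D}$-direction that are reachable from neither $\mathcal{O}(U_0)$ (nonnegative powers) nor $\varphi\cdot\mathcal{O}(U_1)$ (powers bounded above by the shift coming from $\varphi$), the total is a triangular number $\tfrac{m(m-1)}{2}$, where $m=-\deg(\X_{\tilde{\F}}|_{\tilde{D}})$ is governed entirely by the degree of $\varphi$. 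This reduces the lemma to the purely local computation of that degree in the two cases.

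In the non-dicritical case ($\epsilon_p(\F)=0$) this is exactly Mattei's \cite[Lemme 2.2.1]{Mattei_inventions}: here $\tilde{D}$ is $\tilde{\F}$-invariant, $\tilde{v}_i$ is tangent to $\tilde{D}$, and $\sigma^{*}v$ vanishes to order $\nu_p(\F)-1$ along $\tilde{D}$, so $\deg\varphi=\nu_p(\F)-1$, giving $m=\nu_p(\F)-1$ and the value $\tfrac{(\nu_p(\F)-1)(\nu_p(\F)-2)}{2}$.

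The dicritical case ($\epsilon_p(\F)=1$) carries the new content and is the main obstacle. Now $\tilde{D}$ is not invariant ($\tilde{\F}$ is generically transverse to it), so the saturation of $\sigma^{*}v$ and the transition $\varphi$ are dictated by the finitely many tangency and singular points of $\tilde{\F}$ on $\tilde{D}$ rather than by invariance, and one must be careful that the graded count is not perturbed by contributions concentrated at those points. To control $\varphi$ there I would invoke Genzmer's characterization of second-type (possibly dicritical) foliations \cite{genzmer}: the absence of tangent saddle-nodes yields a normal form at each such point from which the order of $\X_{\tilde{\F}}$ along $\tilde{D}$ is read off, and the net effect of dicriticalness is to lower the relevant degree by exactly one, so that $m=\nu_p(\F)-2$. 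The delicate step—and the one I expect to require the full strength of the second-type hypothesis—is precisely to prove that second-type-ness forces this clean single drop with no further correction, after which the graded sum again collapses to $\tfrac{(\nu_p(\F)-2)(\nu_p(\F)-3)}{2}$; uniformly this is the asserted $\tfrac{(\nu_p(\F)-\epsilon_p(\F)-1)(\nu_p(\F)-\epsilon_p(\F)-2)}{2}$.
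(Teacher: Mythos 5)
Your overall skeleton is the same as the paper's: cover $\tilde{D}$ by the two affine charts of the blow-up, use Leray/\v{C}ech to write $H^{1}(\tilde{D},\X_{\tilde{\F}})$ as the cokernel $\X_{\tilde{\F}}(V_{12})/(\X_{\tilde{\F}}(V_1)+\X_{\tilde{\F}}(V_2))$, and reduce everything to a triangular-number count of Laurent monomials governed by a single integer, namely the power of the exceptional divisor absorbed when one saturates the pulled-back vector field. The paper does exactly this, writing the local generators as $X_i$ in formula (8) and listing the surviving monomials explicitly. So the question is whether you have actually determined that integer, and there the proposal has a real gap.

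The gap is the dicritical case, and it is not merely a deferred computation: both the value you assert and the tool you propose to reach it are off. First, Lemma 3.1 carries no second-type hypothesis --- it is stated for an arbitrary singular foliation and a single blow-up $\sigma$ --- so an argument that "requires the full strength of the second-type hypothesis" cannot be the right one. Being of second type is a condition on tangent saddle-nodes appearing in the \emph{full} reduction of singularities; it is irrelevant to the order of vanishing of $\sigma^{*}v$ along the exceptional divisor of \emph{one} blow-up. The relevant dichotomy is elementary and purely local: writing $v=-Q\partial_x+P\partial_y$, one has $\sigma^{*}v=-Q(x_1,x_1y_1)\partial_{x_1}+x_1^{-1}(P+y_1Q)(x_1,x_1y_1)\partial_{y_1}$, and $\sigma$ is dicritical precisely when $xP+yQ$ has order at least $\nu_p(\F)+2$; this single divisibility statement is what fixes the exponent, with no input from Genzmer's classification. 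Second, with your own normalization ("divide $\sigma^{*}v$ by the maximal power of the exceptional divisor"), that computation gives order $\nu_p(\F)-1$ along $\tilde{D}$ in the non-dicritical case but order $\nu_p(\F)$ in the dicritical case: the extra divisibility of $xP+yQ$ is exactly cancelled by the Jacobian factor $x_1$ when passing from the saturated $1$-form to its dual vector field, so dicriticalness \emph{raises} the exponent for the plain tangent sheaf rather than lowering it. Your claimed $m=\nu_p(\F)-2$ therefore does not follow from your setup; to land on the formula of the lemma one must work with the correctly normalized sheaf (the paper's generators $X_i$ in (8) are vector fields tangent to $\tilde{\F}$ \emph{and} to the exceptional divisor, which shifts the exponent in the dicritical case), and pinning down that normalization and its exponent is precisely the content of the lemma beyond Mattei's \cite[Lemme 2.2.1]{Mattei_inventions}. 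As written, the proposal leaves exactly that step open and fills it with a guess that is inconsistent with its own definitions.
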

\begin{proof}
Let $x_1=x$, $y_1=\frac{y}{x}$ and $x_2=\frac{x}{y}$, $y_2=y$ be the local coordinates of $\tilde{\C}^2$. Let $V_1=\{(0,y_1):|y_1|<2\}$ and $V_2=\{(x_2,0):|x_2|<2\}$, we have $\tilde{D}=V_1\cup V_2$ and so that $\mathcal{V}=\{V_{i}\}_{i=1,2}$ is an open covering of $\tilde{D}$. Let $v=-Q(x,y)\partial_{x}+P(x,y)\partial_{y}$ be the vector field  defining $\F$, up some calculations, we obtain that $\sigma^{*}(\F)$ is given by the vector field $v^{(1)}$ satisfy over each chart of $\tilde{\C}^2$:
\begin{equation}\label{campos_1}
X_1=\frac{1}{x_1^{\nu_p(\F)-\epsilon_p(\F)}}\cdot v^{(1)},\,\,\,\,\,\,\,\,\,\,\,\,\,\,\,X_2=\frac{1}{y_2^{\nu_p(\F)-\epsilon_p(\F)}}\cdot v^{(1)}.
\end{equation}
Thus $\X_{\tilde{\F}}(V_i)=\mathcal{O}(V_i)\cdot X_i$ for each $i=1,2$. Set $V_{12}:=V_1\cap V_2=\{(0,y_1):\frac{1}{2}<|y_1|<2\}$. Every section of $\X_{\tilde{\F}}(V_{12})$ can be written as 
$g(x_1,y_1)\cdot X_1$, where 
\begin{equation}\label{function_1}
g(x_1,y_1)=\sum_{(i,j)\in\N\times\Z}g_{ij}x_1^i y_1^{j}
\end{equation} 
is a convergent power series on 
\begin{equation}
\frac{1}{2}<|y_1|<2\,\,\,\,\,\text{and}\,\,\,\,\,\,\, |x_1|<\Gamma_g(y_1),
\end{equation}
 for some continuous function $\Gamma_g:V_{12}\to\R_{+}$. Since elements of $\X_{\tilde{\F}}(V_1)$ can be generate with series of type (\ref{function_1}) whose terms $(i,j)$ with $j<0$ are all zeros, we can consider
$\X_{\tilde{\F}}(V_1)\subset \X_{\tilde{\F}}(V_{12})$.
With respect to $\X_{\tilde{\F}}(V_2)$, their elements are generated by convergent power series of the form 
\[k(x_2,y_2)=\sum_{(\alpha,\beta)\in\N^2}k_{\alpha\beta}x_2^{\alpha}y_2^{\beta}.\]
Therefore, using (\ref{campos_1}), we get
\[g(x_1,y_1)=\frac{1}{y^{\nu_p(\F)-\epsilon_p(\F)}_1}k(\frac{1}{y_1},x_1 y_1),\]
assuming the convergency of $k(x_2,y_2)$ on $\frac{1}{2}<|x_2|<2$ and $|y_2|<\Gamma_k(x_2)$, for some continuous function $\Gamma_k:V_{12}\to\R_{+}$.
Hence, $\X_{\tilde{\F}}(V_2)$ can be generated by power series of type (\ref{function_1}) whose terms $(i,j)$ with $j>i-\nu_p(\F)+\epsilon_p(\F)$ are all zeros and so that $\X_{\tilde{\F}}(V_2)\subset \X_{\tilde{\F}}(V_{12})$. Then, applying Leray's theorem, we obtain
\[H^{1}(\tilde{D},\X_{\tilde{\F}})=\check{H}^{1}(\mathcal{V}, \X_{\tilde{\F}})=\frac{\X_{\tilde{\F}}(V_{12})}{\X_{\tilde{\F}}(V_1)+\X_{\tilde{\F}}(V_2)}.\]
Thus, a basis for $H^{1}(\tilde{D},\X_{\tilde{\F}})$ is given by the sections 
\begin{equation}\label{section_1}
X_{ij}=x_1^{i}y_1^{j}X_1\in\X_{\tilde{\F}}(V_{12})\,\,\,\,\,\,\text{such that}\,\,\,\,\,\,\,\,i\geq 0,\,\,\,\,\,\text{and}\,\,\,\,\,\,\,i-\nu_p(\F)+\epsilon_p(\F)<j<0. 
\end{equation}
In particular, the dimension of $H^{1}(\tilde{D},\X_{\tilde{\F}})$ is $\frac{(\nu_p(\F)-\epsilon_p(\F)-1)(\nu_p(\F)-\epsilon_p(\F)-2)}{2}$.
\end{proof}
Let $\pi:=\sigma_1\circ\ldots\circ\sigma_{\ell}:(\tilde{X},\mathcal{D})\to(\C^2,p)$ be a reduction of singularities of $\F$ at $p\in\C^2$. Denote by $F=f/h$  a reduced balanced equation of separatrices for $\F$ and by $Z_0$ and $Z_{\infty}$  the respective strict transforms by $\pi$ of the curves $\{f=0\}$ and $\{h=0\}$. Let $\tilde{\F}=\pi^{*}(\F)$ be the strict transform of $\F$ by $\pi$. Let $\X_{\tilde{\F}}$ be the sheaf of vector fields tangent to $\tilde{\F}$ and  let $\mathcal{X}_{Z_{0}}$ be the sheaf of vector fields tangent to the divisor $\D$ and to $Z_0$.
\begin{proposition}\label{prop_1}
Let $\F$ be a germ of a second type holomorphic foliation at $(\C^2,p)$ and let $F=f/h$ be a reduced balanced equation of separatrices for $\F$. Put $\varphi=f\circ \pi$.
Then, the morphism 
$$[\varphi]\cdot:H^{1}(\D,\X_{\tilde{\F}})\to H^{1}(\D,\X_{\tilde{\F}}),\,\,\,\,\,\,[Y_{ij}]\longmapsto[\varphi\cdot Y_{ij}]$$
is identically zero. 
\end{proposition}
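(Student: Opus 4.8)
The plan is to deduce the vanishing of multiplication by $[\varphi]$ from a monomial bookkeeping in the \v{C}ech model supplied by Lemma \ref{lemma_1}, reading off the required cancellation from the second type identity \eqref{genzmer:1}. First I would represent $H^{1}(\D,\X_{\tilde{\F}})$ through Leray's theorem with respect to a cover of $\D$ adapted to the irreducible components of the exceptional divisor, exactly as in the proof of Lemma \ref{lemma_1}. The classes are represented by the monomial vector fields $Y_{ij}=x_{1}^{i}y_{1}^{j}X_{1}$ lying in the window $i\ge 0$, $i-\nu_{p}(\F)+\epsilon_p(\F)<j<0$ of \eqref{section_1}, where $X_1$ is the generator of \eqref{campos_1}. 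A class dies precisely when its representative splits into a piece holomorphic on $V_1$ (nonnegative exponent of $y_1$) plus a piece holomorphic on $V_2$ (exponent of $y_1$ at most $i-\nu_p(\F)+\epsilon_p(\F)$), so the task is to show that $\varphi\cdot Y_{ij}$ always admits such a splitting.

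Next I would compute the local form of $\varphi=f\circ\pi$ in this chart. Since $f$ is a reduced equation of the zero divisor $\B_0$, the total transform factors as $\varphi=x_{1}^{\nu_p(\B_0)}g(x_1,y_1)$, so multiplication by $\varphi$ shifts the $x_1$-degree of each generator by $\nu_p(\B_0)$. The heart of the argument is then arithmetic: expanding $\varphi\cdot Y_{ij}$ as a sum of monomials $x_1^{\,i+\nu_p(\B_0)+s}y_1^{\,j+t}X_1$, I would verify that every one satisfies $j+t\ge 0$ or $j+t\le (i+\nu_p(\B_0)+s)-\nu_p(\F)+\epsilon_p(\F)$, whence it belongs to $\X_{\tilde{\F}}(V_1)+\X_{\tilde{\F}}(V_2)$ and represents the zero class. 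In the non-dicritical case one has $\B_\infty=\emptyset$, $\epsilon_p(\F)=0$, and, by \eqref{genzmer:1}, $\nu_p(\F)=\nu_p(\B_0)-1$; substituting this makes the forbidden middle range $i-j+1<t<-j$ empty for every window pair $(i,j)$ (the case $s>0$ only helps), so the cancellation is automatic, recovering the non-dicritical case treated by Mattei.

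The main obstacle is the dicritical case, where $\B_\infty\neq\emptyset$ and the naive shift by $\nu_p(\B_0)$ falls short of the window width, since now $\nu_p(\F)=\nu_p(\B_0)+\nu_p(\B_\infty)-1$. Here I would invoke Genzmer's description \cite{genzmer} of the dicritical components of second type foliations: along such a component $\tilde{\F}$ is everywhere transverse to $D$, which constrains the degree of the leading form $g(0,y_1)$ of $\varphi$ and pins down how the separatrices counted by $\B_\infty$ distribute among the corners. Feeding this structural information into the arithmetic check closes the apparent gap and keeps every monomial of $\varphi\cdot Y_{ij}$ inside $\X_{\tilde{\F}}(V_1)+\X_{\tilde{\F}}(V_2)$. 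The same computation displays the sharpness of the hypothesis: were $\F$ not of second type, \eqref{G-nst} would replace the identity by $\nu_p(\F)=\nu_p(\B)-1+\xi_p(\F)$ with $\xi_p(\F)>0$, the shift would be deficient by exactly $\xi_p(\F)$, a nonempty middle range would survive, and multiplication by $[\varphi]$ would fail to vanish, which is the phenomenon exhibited in Example \ref{Fk}.
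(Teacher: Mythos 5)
Your computation for a single blow-up is essentially the paper's base case, but the proposal has a genuine gap: you treat the monomial model of Lemma \ref{lemma_1} as if it described $H^{1}(\D,\X_{\tilde{\F}})$ for the \emph{full} reduction of singularities $\pi=\sigma_1\circ\cdots\circ\sigma_\ell$. That model (the window $i\ge 0$, $i-\nu_p(\F)+\epsilon_p(\F)<j<0$ in the chart $(x_1,y_1)$) is only valid for $\ell=1$; for $\ell>1$ the cohomology of the full exceptional divisor receives contributions from every infinitely near point and admits no such global monomial basis. Your plan provides no mechanism for passing from one blow-up to the whole reduction. The paper supplies exactly this missing step by an induction on $\ell$, using Mattei's exact sequence
$0\to H^{1}(\tilde{D},\X_{\tilde{\F}^{1}})\to H^{1}(\D,\X_{\tilde{\F}})\to H^{1}(D',\X_{\tilde{\F}})\to 0$
together with a commutative diagram whose outer vertical arrows are multiplication maps killed by the base case and the induction hypothesis. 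Without some such argument the proof is incomplete.

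A second, more local problem: your treatment of the dicritical case rests on a sign error. Second type gives $\nu_p(\F)=\nu_p(\B)-1=\nu_p(\B_0)-\nu_p(\B_\infty)-1$, not $\nu_p(\B_0)+\nu_p(\B_\infty)-1$; hence $\nu_p(\B_0)=\nu_p(\F)+\nu_p(\B_\infty)+1\ge\nu_p(\F)+1$, and the shift of the $x_1$-exponent by $\nu_p(\B_0)$ is \emph{more} than enough, exactly as in the non-dicritical case (every monomial $x_1^{a}y_1^{b}X_1$ of $\varphi\cdot Y_{ij}$ with $b<0$ satisfies $b<0<a-\nu_p(\F)+\epsilon_p(\F)$). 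The ``obstacle'' you describe does not exist, and the proposed appeal to Genzmer's transversality along dicritical components to ``constrain the leading form'' is neither needed nor actually carried out; as written it is a hope rather than an argument. Your closing remark about $\xi_p(\F)>0$ forcing nonvanishing is also too strong: a deficient shift only means the computation fails to show the map is zero, not that the map is nonzero (Example \ref{Fk} shows failure of the theorem's conclusion, which is a different statement).
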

\begin{proof}
We will prove by induction on the number $\ell$ of blow-ups needed to obtain the reduction of singularities of $\F$. If $\ell=1$, then $H^{1}(\tilde{D},\X_{\tilde{\F}})$ is of finite dimension by Lemma \ref{lemma_1} and it follows from (\ref{section_1}) that 
\[
X_{ij}=x_1^{i}y_1^{j}X_1\,\,\,\,\,\,\text{such that}\,\,\,\,\,\,\,\,(i,j)\in I=\{(i,j):i\geq 0,\,\,\,\,\text{and}\,\,\,\,\,i-\nu_p(\F)+\epsilon_p(\F)<j<0\}
\]
induce a basis for $H^{1}(\tilde{D},\X_{\tilde{\F}}),$ where $x_1=x$, $y_1=\frac{y}{x}$ and $x_2=\frac{x}{y}$, $y_2=y$ are the local coordinates of the blow-up $\tilde{X}$ and $X_1$ is as (\ref{campos_1}). Therefore the sections of the form
\begin{equation}\label{section_2}
x_1^{i}y_1^{j}X_1\,\,\,\,\,\,\text{such that}\,\,\,\,\,\,\,\,i\geq 0,\,\,\,\,j\geq 0\,\,\,\,\,\text{or}\,\,\,\,\,\,\,j\leq i-\nu_p(\F)+\epsilon_p(\F)
\end{equation} 
are elements of $\check{B}(\D,\X_{\tilde{\F}})$ (i.e. 1-coboundary of $\X_{\tilde{\F}}$). Since $\F$ is of second type, $\nu_p(\F)=\nu_p(F)-1$, which implies that $\nu_p(f)=\nu_p(h)+\nu_p(\F)+1$. In particular, $\varphi=f\circ\pi\in(x_1^{\nu_p(h)+\nu_p(\F)+1})$. Hence the sections $\varphi\cdot X_{ij}$ with $(i,j)\in I$ are elements of $\check{B}(\D,\X_{\tilde{\F}})$ and the proof of proposition ends for $\ell=1$.
\par Now, for the general case, we use the exact sequence (see \cite[page 312]{Mattei_inventions}):
\[\xymatrix{
0 \ar[r] & H^{1}(\tilde{D},\X_{\tilde{\F}^{1}}) \ar[r]^{\rho} & H^{1}(\D,\X_{\tilde{\F}}) \ar[r]^{\psi} & H^{1}(D',\X_{\tilde{\F}}) \ar[r] & 0\\
}\]
where $\tilde{D}=\sigma_1^{-1}(p)$, $\tilde{\F}^{1}$ is the strict transform of $\F$ by $\sigma_1$, $D'$ is the union of irreducible components of $\D$ different of $\tilde{D}$, $\psi$ is the restriction morphism and $\rho$ is the morphism induced by the natural inclusion of $\tilde{D}$ in $\D$. Finally, since the following diagram is commutative 
\[\xymatrix{
0 \ar[r] & H^{1}(\tilde{D},\X_{\tilde{\F}^{1}}) \ar[r]^{\rho} \ar[d]^{[f\circ\sigma_1] \cdot}& H^{1}(\D,\X_{\tilde{\F}}) \ar[r]^{\psi} \ar[d]^{[\varphi] \cdot} & H^{1}(D',\X_{\tilde{\F}}) \ar[r] \ar[d]^{[f\circ\sigma_2\circ\ldots\circ\sigma_{\ell}] \cdot}  & 0\\
0 \ar[r] &  H^{1}(\tilde{D},\X_{\tilde{\F}^{1}}) \ar[r]^{\rho}& H^{1}(\D,\X_{\tilde{\F}}) \ar[r]^{\psi} & H^{1}(D',\X_{\tilde{\F}}) \ar[r] & 0}\]
we get $[\varphi]\cdot$ is identically zero, because $[f\circ\sigma_1] \cdot$ and $[f\circ\sigma_2\circ\ldots\circ\sigma_{\ell}] \cdot$ are morphisms identically zero by the first step of the proof and induction hypothesis, respectively. 
\end{proof}

\par Now, we prove our main result. 
\begin{maintheorem}
\label{B-Sth}
Let $\F$ be a germ of a second type holomorphic foliation at $(\C^2,p)$ induced by $\omega=P(x,y)dx+Q(x,y)dy$, where $P,Q\in\C\{x,y\}$, and let $F=f/h$ be a reduced balanced equation of separatrices for $\F$. Then $f^2$ belongs to ideal $(P,Q)$.  
\end{maintheorem}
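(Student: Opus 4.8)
The plan is to deduce the theorem from the cohomological vanishing of Proposition \ref{prop_1} by transporting a multiplication-by-$f$ action from the resolution down to $(\C^2,p)$. First I would record the algebraic reformulation: for $w=a\partial_x+b\partial_y$ one has $\omega(w)=aP+bQ$, so $(P,Q)$ is exactly the image of the contraction $\omega\colon\Theta_{(\C^2,p)}\to\mathcal{O}_p$, its cokernel is the module $\mathcal{O}_p/(P,Q)$, and its kernel is $\mathcal{O}_p\cdot v$ (here $P,Q$ coprime forces every syzygy to be an $\mathcal{O}_p$-multiple of $v=-Q\partial_x+P\partial_y$, by Koszul). Thus $f^2\in(P,Q)$ is equivalent to the vanishing of the class of $f^2$ in $\mathcal{O}_p/(P,Q)$, i.e. to solving $\omega(w)=f^2$ with $w$ holomorphic.

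Next I would lift this to the resolution $\pi$. The key sheaf properties are $\pi_*\X_{\tilde\F}=\X_\F$, with stalk at $p$ the free module $\mathcal{O}_p\cdot v$ (a holomorphic field is tangent to $\F$ iff $\omega$ annihilates it, hence lies in $\mathcal{O}_p v$), and $R^1\pi_*\X_{\tilde\F}$ a skyscraper at $p$ whose stalk is $H^1(\D,\X_{\tilde\F})$, since the higher direct image is concentrated on $\D=\pi^{-1}(p)$. Multiplication by $\varphi=f\circ\pi$ is an injective endomorphism of $\X_{\tilde\F}$ which, under $\pi_*$, becomes multiplication by $f$ on $\mathcal{O}_p\cdot v\cong\mathcal{O}_p$. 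Taking the long exact sequence of $R\pi_*$ applied to $0\to\X_{\tilde\F}\xrightarrow{\varphi}\X_{\tilde\F}\to\X_{\tilde\F}/\varphi\X_{\tilde\F}\to0$, and comparing it with the ambient contraction by $\omega$ that produces $(P,Q)$ downstairs, I would identify the obstruction to passing from the resolution back to a holomorphic solution of $\omega(w)=f^2$ with a subquotient of $H^1(\D,\X_{\tilde\F})$ on which the induced action is precisely multiplication by $f$. Proposition \ref{prop_1} states that $[\varphi]$ acts as zero on $H^1(\D,\X_{\tilde\F})$; chasing the diagram then forces $f$ to annihilate that obstruction, which is exactly $f^2\in(P,Q)$.

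The exponent $2$ should be tracked explicitly, and I would account for it as follows. One factor of $f$ is free: since $\{f=0\}$ is $\F$-invariant, the cofactor identity $v(f)=f\,g$ holds, equivalently $\omega(-f_y\partial_x+f_x\partial_y)=-fg$, so that $fg\in(P,Q)$ with no hypothesis at all. The obstruction to upgrading this to $f^2\in(P,Q)$ is the class measured by $H^1(\D,\X_{\tilde\F})$, and Proposition \ref{prop_1} supplies the missing second factor of $f$. In colon-ideal language the obstruction module is $\mathcal{O}_p/\big((P,Q):f\big)$, on which the statement that $[\varphi]$ acts as zero reads $f\in(P,Q):f$, that is $f^2\in(P,Q)$; this also explains why one cannot expect $f\in(P,Q)$ in general.

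The main obstacle is the precise identification of the sheaves and of the connecting homomorphism in the second step, together with checking that the second-type hypothesis is exactly what makes the comparison legitimate. Second type enters through $\nu_p(\F)=\nu_p(F)-1$ (equation \eqref{genzmer:1}), which is what guarantees that $\varphi=f\circ\pi$ carries the correct power of the exceptional divisor for the sections $\varphi\cdot X_{ij}$ to be coboundaries; this is precisely the input to Proposition \ref{prop_1}, and it fails for foliations outside the second class, consistently with Example \ref{Fk}. The remaining care is the bookkeeping of the exceptional multiplicities when pulling $\omega$ and $f$ through $\pi$ in the dicritical case: there I would invoke Genzmer's description of dicritical second-type foliations and the balanced equation $F=f/h$, so that $f$ represents the zero divisor $\B_0$, to ensure that $\pi_*\X_{\tilde\F}=\X_\F$ and that the two multiplication actions match verbatim.
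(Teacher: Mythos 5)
Your overall strategy coincides with the paper's: realize $f^2\in(P,Q)$ as the solvability of $\omega(w)=f^2$ for a holomorphic vector field $w$, pass to the reduction of singularities, kill a cohomological obstruction in $H^1(\D,\X_{\tilde\F})$ via Proposition \ref{prop_1}, and descend by direct image plus Hartogs. The descent step and the role of Proposition \ref{prop_1} are exactly as in the paper.

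However, there is a genuine gap in the middle: you never construct the cocycle on which Proposition \ref{prop_1} is supposed to act, i.e.\ you never establish the local solvability of $\pi^{*}(\omega)(X_i)=f\circ\pi$ on an open cover of $\D$. This is not a formality; it is the second, independent place where the second-type hypothesis enters. The paper obtains it from Genzmer's exact sequence
\[
0 \to \X_{\tilde{\F}} \to \X_{Z_{0}} \xrightarrow{\ \pi^{*}(\omega/F)\ } \mathcal{O}(-Z_{\infty}) \to 0 ,
\]
valid for second-type foliations by \cite[Proposition 3.1]{genzmer}: surjectivity of the contraction onto $\mathcal{O}(-Z_{\infty})=(h\circ\pi)\mathcal{O}$ produces local fields $X_i$ tangent to $\D$ and $Z_0$ with $\pi^{*}(\omega)(X_i)=f\circ\pi$, whence $X_{ij}=X_i-X_j$ is a $1$-cocycle with values in $\X_{\tilde\F}$, and Proposition \ref{prop_1} makes $(f\circ\pi)X_{ij}$ a coboundary, yielding a global $\tilde v$ with $\pi^{*}(\omega)(\tilde v)=(f\circ\pi)^2$. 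Your substitute --- the long exact sequence of $R\pi_{*}$ applied to $0\to\X_{\tilde\F}\xrightarrow{\varphi}\X_{\tilde\F}\to\X_{\tilde\F}/\varphi\X_{\tilde\F}\to0$ --- is never connected to the ideal $(P,Q)$, because no comparison map with the contraction by $\omega$ is exhibited; asserting that the obstruction module ``is'' $\mathcal{O}_p/\bigl((P,Q):f\bigr)$ and that $[\varphi]=0$ ``reads'' $f^2\in(P,Q)$ restates the goal rather than deriving it. Likewise, the cofactor identity only gives $f\cdot g\in(P,Q)$ for the particular cofactor $g$, which does not supply ``one free factor of $f$'' toward $f^2$. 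To close the argument you must either invoke Genzmer's sequence, as the paper does, or prove an equivalent local solvability statement on the resolution.
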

\begin{proof}
 Let $\pi:(\tilde{X},\mathcal{D})\to(\C^2,p)$ be a reduction of singularities of $\F$ and $\tilde{\F}=\pi^{*}(\F)$ be the strict transform of $\F$ by $\pi$. According to Genzmer \cite[Proposition 3.1]{genzmer}, since $\F$ is of second type, we have the exact sequence of sheaves 
 \[\xymatrix{
0 \ar[r] & \X_{\tilde{\F}} \ar[r] & \X_{Z_{0}} \ar[r]^{\pi^{*}(\frac{\omega}{F})} & \mathcal{O}(-Z_{\infty}) \ar[r] & 0,\\
}\]
where $\X_{\tilde{\F}}$ be the sheaf of vector fields tangent to $\tilde{\F}$ and let $\mathcal{X}_{Z_{0}}$ be the sheaf of vector fields tangent to the divisor $\D$ and to $Z_0$. Then, there exists a covering of $\D$ by open subsets $V_i\subset \tilde{X}$ and holomorphic vector fields $X_i\in \X_{Z_{0}}(V_i)$ such that 
\[\pi^{*}\left(\frac{\omega}{F}\right)(X_i)=h\circ \pi,\,\,\,\,\,\,\,\mathcal{O}(-Z_{\infty})=(h\circ\pi)\mathcal{O},\] which implies that
 \begin{equation}\label{eqq_1}
 \pi^{*}(\omega)(X_i)=(F\circ\pi) \cdot (h\circ \pi)=f\circ\pi.
 \end{equation}
Let $X_{ij}:=X_i-X_j$. It follows from (\ref{eqq_1}) that $\pi^{*}(\omega)(X_{ij})=0$.
Hence
$X_{ij}$ is a 1-cocycle with values over the sheaf $\X_{\tilde{\F}}$ and therefore 
$$[(f\circ\pi)X_{ij}]=0\in H^{1}(\D,\X_{\tilde{\F}}),$$
by Proposition \ref{prop_1}.
Thus, there exists a holomorphic vector field $\tilde{v}$ on $\D$ such that $\tilde{v}|_{V_{i}}=(f\circ\pi)\cdot X_{i}$. Up multiplication by $f\circ\pi$ in (\ref{eqq_1}), we get
$$ \pi^{*}(\omega)(\tilde{v})=(f\circ\pi)^2=f^{2}\circ\pi.$$
The direct image of $\tilde{v}$ by $\pi$ over $(\C^2,p)$ is a holomorphic vector field outside the origin of $\C^2$. The proof ends, by applying Hartogs extension theorem.\end{proof}
\par We note that Theorem \ref{B-Sth} is optimal, in the sense, that the hypothesis on the foliation be of second type cannot be removed. For instance, we have the following example.
\begin{example}\label{Fk} 
Let $\omega=y(2x^8+2(\lambda+1)x^2y^3-y^4)dx+x(y^4-(\lambda +1)x^{2}y^3-x^8)dy$ be a $1$-form defining  a  singular foliation $\F$ at $(\C^2,0)$, which is not of second type and $xy=0$ is the equation of an effective divisor of separatrices for $\F$ (see \cite[Example 6.5]{FP-GB-SM}). We claim that $(xy)^2$ does not belong to the ideal generated by the components of $\omega$. In fact, if $P(x,y):=y(2x^8+2(\lambda+1)x^2y^3-y^4)$, $Q(x,y):=x(y^4-(\lambda +1)x^{2}y^3-x^8)$ and we suppose that $(xy)^2=a(x,y)P(x,y)+b(x,y)Q(x,y)$ for some $a(x,y),b(x,y)\in \mathbb C[[x,y]]$ then $4=\ord (xy)^2\geq \min \{\ord (a(x,y)P(x,y), \ord (b(x,y)Q(x,y)\}\geq 5$ which is a contradiction.
\end{example}

The following corollary will be useful in the following section:
\begin{corollary}\label{coro:BS}
Let $\F$ be a germ of a second type holomorphic foliation at $(\C^2,p)$ induced by $\omega=P(x,y)dx+Q(x,y)dy$, where $P,Q\in\C\{x,y\}$, and let $\B$ be a reduced balanced equation of separatrices for $\F$. If $\B_0:f(x,y)=0$ and $\bar f$ is the coset of $f$ modulo $(P,Q)$ then the complex vector spaces $(f,P,Q)/(P,Q)$ and $(\bar f)/(\bar f^2)$ are isomorphic.
\end{corollary}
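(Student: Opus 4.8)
The plan is to reduce both sides of the claimed isomorphism to one and the same object — the principal ideal $(\bar f)$ generated by $\bar f$ in the quotient ring $A:=\C\{x,y\}/(P,Q)$ — and the engine driving the reduction is Theorem~\ref{B-Sth} just established.

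First I would use the Brian\c{c}on--Skoda conclusion: since $\F$ is of second type, $f^2\in(P,Q)$, so that in $A$ we have $\bar f^{\,2}=\overline{f^2}=0$. This collapses the denominator on the right-hand side, for the ideal $(\bar f^2)=(\bar f)^2$ is then the zero ideal of $A$, whence $(\bar f)/(\bar f^2)=(\bar f)/(0)\cong(\bar f)$. Thus the whole content of the right-hand space is carried by $(\bar f)=\bar f A$.

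Next I would identify the left-hand space with the same ideal. The cleanest route is to restrict the canonical projection $q\colon\C\{x,y\}\to A$ to the ideal $(f,P,Q)$. Because $(P,Q)\subseteq(f,P,Q)$, the kernel of $q|_{(f,P,Q)}$ is exactly $(f,P,Q)\cap(P,Q)=(P,Q)$; and because $q$ annihilates $P,Q$ while $(f,P,Q)=f\cdot\C\{x,y\}+(P,Q)$, the image is $q\bigl(f\cdot\C\{x,y\}\bigr)=\bar f A=(\bar f)$. The first isomorphism theorem then gives $(f,P,Q)/(P,Q)\cong(\bar f)$ as $A$-modules, hence as complex vector spaces. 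Composing the two identifications produces the asserted isomorphism $(f,P,Q)/(P,Q)\cong(\bar f)\cong(\bar f)/(\bar f^2)$. (Equivalently, one may write down the map $A\to(f,P,Q)/(P,Q)$, $\bar a\mapsto\overline{af}$, and check it is a well-defined surjection with kernel the annihilator of $\bar f$.)

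I do not foresee a genuine obstacle: the argument is a two-step application of the isomorphism theorems, with Theorem~\ref{B-Sth} supplying the single nontrivial input, namely $\bar f^{\,2}=0$, which is precisely what trivializes $(\bar f)/(\bar f^2)$. The only points demanding a little care are bookkeeping: verifying that the restricted projection (or the map $\bar a\mapsto\overline{af}$) is well defined with the stated kernel and image, and fixing the ground ring once and for all — working in $\C\{x,y\}$, or in its formal completion $\C[[x,y]]$ should the balanced equation $f$ be merely formal, so that $f,P,Q$ and both quotients live in a single ring. It is also worth recording that for an isolated singularity these are finite-dimensional $\C$-vector spaces, which is what renders the ensuing dimension count meaningful for the Milnor--Tjurina comparison of the next section.
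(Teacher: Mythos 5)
Your proof is correct and is essentially the paper's own argument: the authors define the single map $\psi\colon(f,P,Q)\to(\bar f)/(\bar f^{2})$, $g f+\alpha P+\beta Q\mapsto \overline{gf}$ mod $(\bar f^{2})$, which is exactly the composite of your two identifications, and they invoke Theorem~\ref{B-Sth} in the same way you do (to collapse $(\bar f^{2})$ to zero, equivalently to force the kernel down to $(P,Q)$). Your version just makes the two applications of the isomorphism theorem explicit; no substantive difference.
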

\begin{proof}
Put $\frak T=(f,P,Q)$. The map $\psi:\frak T\longrightarrow (\bar f)/(\bar f^2)$ given by \[\psi(g_zf+\alpha P+\beta Q)=\overline{g_zf}\;\; \hbox{\rm mod } (\bar f^2)\] is an epimorphism of complex vector spaces. Finally by Theorem \ref{B-Sth} the kernel of $\psi$ equals $(P,Q)$.

\end{proof}

 \section{Milnor and Tjurina numbers after the Brian\c{c}on-Skoda theorem}
 \label{sec:MT}
 Let $\F$ be a singular holomorphic foliation at $(\C^{2},p)$ given by the $1$-form  $\omega:=P(x,y)dx+Q(x,y)dy$. Assume that $\F$ has an isolated singularity at $p$ and consider the jacobian  ideal associated with $\F$ given by $J(\F)=(P,Q)$. Then $\mathcal M(\F):=\C[[x,y]]/J(\F)$ is a finite $\C$-dimensional vector space which dimension is called the {\it Milnor number} of $\F$ and we denote it by $\mu_{p}(\F)$. It is well-known, after \cite{CLS}, that the Milnor number is a topological invariant of the foliation. Let $C: f(x,y)=0$ be an $\F-$invariant reduced curve.  
 Put $\mathcal T(\F,C):=\mathbb  C[[x,y]]/(f,P,Q)$, \noindent where $(\cdot, \cdot, \cdot)$ denotes the ideal generated by three elements in $\mathbb C[[x,y]]$. 
 
\noindent The {\it Tjurina number of} $\F$  {\it with respect to} $C$ is
\[\tau_p(\F,C)=\dim_{\mathbb  C} \mathcal T(\F,C).\]

\par Let $\B$ be a balanced divisor of separatrices for $\F$. Put $\B_{0}: f(x,y)=0$ the zero divisor of $\B$. By definition $\tau_p(\F, \B_{0})\leq \mu_{p}(\F)$. Put $\frak T=(f,P,Q)$.  From the third isomorphic theorem for complex vector spaces we have 

\[\tau_p(\F,\B_{0})=\dim_{\C} \mathbb C[[x,y]]/\frak T=\dim_{\C} \mathcal M(\F)-\dim_{\C} \frak T/J(\F),\] 

so
\begin{equation}\label{diferencia}
\mu_p(\F)-\tau_p(\F,\B_{0})=\dim_{\C} \frak T/J(\F).
\end{equation}

For any $z\in \mathbb C[[x,y]]$ we denote by $\bar z$ the coset of $z$ modulo $J(\F)$ and $\hat z$ its coset modulo $\frak T$. Inspired by Liu \cite{Liu} we consider the exact sequence
\[0\longrightarrow {\rm Ker }\; \sigma \stackrel{i}\longrightarrow \mathcal M(\F) \stackrel{\sigma} \longrightarrow \mathcal M(\F) \stackrel{\delta_{\B}}\longrightarrow  \mathcal T(\F,\B_{0})\longrightarrow 0,\]
where  $i$ is the inclusion map, $\sigma$ is the multiplication by $\bar f$, that is, $\sigma(\bar z)=\overline{zf}$ and $\delta_{\B}(\bar z)=\hat z$. 
Since  $\delta_{\B}$ is surjective, we get 
\begin{equation}\label{diferencia2}
\mu_p(\F)-\tau_p(\F,\B_{0})=\dim_{\C} {\rm Ker }\;\delta_{\B}.
\end{equation}

From \eqref{diferencia2} and the equality $\mu_p(\F)=\dim_{\C} {\rm Ker }\;\sigma+\dim_{\C} {\rm Im }\;\sigma$, we conclude

\begin{equation}
\label{kernel}
\tau_p(\F,\B_{0})=\dim_{\C}{\rm Ker }\;\sigma=\dim_{\C} (J(\F):\B_{0})/J(\F),
\end{equation}
where $(J(\F):\B_{0})=\{z\in \C[[x,y]]\;:\;zf\in J(\F)\}$.

\begin{proposition}\label{Liu_fol}
Let $\F$ be a singular holomorphic foliation of second type at $(\C^2,p)$ given by  the $1$-form $\omega= P(x,y)dx+Q(x,y)dy=0$. Let $\B$ be a balanced divisor of separatrices for $\F$ with $\B_{0}: f(x,y)=0$. Then 
$\tau_p(\F,\B_{0})\leq \mu_p(\F)\leq 2 \tau_p(\F,\B_{0})$. Moreover $\mu_p(\F)= 2 \tau_p(\F,\B_{0})$ if and only if $\ker \sigma=(\bar f)$, where $\bar f$ is the coset of $f$ modulo $(P,Q)$.
\end{proposition}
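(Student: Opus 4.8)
The plan is to exploit the exact sequence displayed immediately before the statement together with the Brian\c{c}on-Skoda theorem (Theorem \ref{B-Sth}) in the form $f^2\in(P,Q)$. The decisive observation is that this membership says exactly that $\bar f^2=0$ in $\mathcal M(\F)$, which means that the multiplication map $\sigma$ (multiplication by $\bar f$) satisfies $\sigma^2=0$. Hence $\mathrm{Im}\,\sigma\subseteq\ker\sigma$, and in particular $\dim_{\C}\mathrm{Im}\,\sigma\leq\dim_{\C}\ker\sigma$. This single inequality is the engine of the whole argument.

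Next I would read off the two relevant dimensions. Exactness of the sequence at the first copy of $\mathcal M(\F)$ (and surjectivity of $\delta_{\B}$ onto $\mathcal T(\F,\B_0)$) gives $\dim_{\C}\ker\sigma=\tau_p(\F,\B_0)$, which is precisely \eqref{kernel}; and applying rank-nullity to $\sigma\colon\mathcal M(\F)\to\mathcal M(\F)$ yields $\dim_{\C}\mathrm{Im}\,\sigma=\mu_p(\F)-\dim_{\C}\ker\sigma=\mu_p(\F)-\tau_p(\F,\B_0)$. Feeding these into $\dim_{\C}\mathrm{Im}\,\sigma\leq\dim_{\C}\ker\sigma$ gives $\mu_p(\F)-\tau_p(\F,\B_0)\leq\tau_p(\F,\B_0)$, that is, $\mu_p(\F)\leq 2\tau_p(\F,\B_0)$. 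The lower bound $\tau_p(\F,\B_0)\leq\mu_p(\F)$ holds by definition, since $\mathcal T(\F,\B_0)$ is a quotient of $\mathcal M(\F)$; this closes the chain of inequalities.

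For the equality case, note that $\mu_p(\F)=2\tau_p(\F,\B_0)$ is equivalent to $\dim_{\C}\mathrm{Im}\,\sigma=\dim_{\C}\ker\sigma$. Combined with the inclusion $\mathrm{Im}\,\sigma\subseteq\ker\sigma$ already established, this forces $\mathrm{Im}\,\sigma=\ker\sigma$. I would then identify $\mathrm{Im}\,\sigma$ concretely: since $\sigma(\bar z)=\overline{zf}$, its image is the ideal $\bar f\cdot\mathcal M(\F)=(\bar f)$ of $\mathcal M(\F)$. Therefore equality holds if and only if $\ker\sigma=(\bar f)$, which is exactly the asserted characterization.

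I do not expect a genuine obstacle here: all the real difficulty is absorbed into Theorem \ref{B-Sth}, whose content (the quadratic membership $f^2\in(P,Q)$) is precisely what makes $\sigma$ a square-zero operator and hence forces $\mathrm{Im}\,\sigma\subseteq\ker\sigma$. The only steps requiring a little care are the bookkeeping of which dimension along the exact sequence equals $\tau_p(\F,\B_0)$ and which equals $\mu_p(\F)-\tau_p(\F,\B_0)$, and the harmless identification of $\mathrm{Im}\,\sigma$ with the principal ideal $(\bar f)$ in the Artinian ring $\mathcal M(\F)$.
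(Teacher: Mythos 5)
Your proof is correct and rests on the same pillars as the paper's: Theorem \ref{B-Sth} giving $\overline{f^{2}}=\bar 0$ in $\mathcal M(\F)$, the exact sequence yielding $\dim_{\C}\ker\sigma=\tau_p(\F,\B_0)$ and $\dim_{\C}\mathrm{Im}\,\sigma=\mu_p(\F)-\tau_p(\F,\B_0)$, and the resulting comparison of these two dimensions. Your route is a streamlined version of the paper's: where the paper passes through Corollary \ref{coro:BS} and an auxiliary four-term sequence on the ideal $(\bar f)$ to show $\dim_{\C}(\bar f)/(\bar{f^{2}})\leq\dim_{\C}\ker\sigma$, you obtain the same inequality directly from $\sigma^{2}=0$, i.e.\ $\mathrm{Im}\,\sigma\subseteq\ker\sigma$, and the identification $\mathrm{Im}\,\sigma=(\bar f)$ also makes the equality criterion immediate.
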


\begin{proof}
Let us prove the inequality $\mu_p(\F)\leq 2 \tau_p(\F,\B_{0}).$ By Theorem \ref{B-Sth} we get $f^{2}\in J(\F)$, that is, $\overline{f^{2}}=\bar 0\in \mathcal M(\F)$. Hence, we get the inclusion of ideals $\frak T\subseteq {\rm Ker } \; \sigma$.  Moreover we have the following chain of ideals of  $\mathcal M(\F)$:
\[
\mathcal M(\F) \supseteq (\bar f) \supseteq (\bar{f^{2}})=(\bar 0)
\]
where $(\cdot)$ denotes a principal ideal. We also have  the exact sequence: 
\[0\rightarrow {\rm Ker }\; \sigma \cap (\bar f)\stackrel{i} \rightarrow (\bar f) \stackrel{\sigma'} \rightarrow (\bar f) \stackrel{e} \rightarrow (\bar f)/(\bar{f^{2}})\rightarrow 0,
\]
where $i$ is the inclusion map, $\sigma'$ is the multiplication by the coset $\bar f$ and $e$ is the natural epimorphism. We have 
\begin{eqnarray*}
\dim_{\C} {\rm Ker }\;\sigma'+\dim_{\C} {\rm Im }\;\sigma' &=&\dim_{\C} (\bar f)=\dim_{\C} {\rm Ker }\;e+\dim_{\C} {\rm Im }\;e\\
&=&\dim_{\C} {\rm Im }\;\sigma'+\dim_{\C} (\bar f)/(\bar{f^{2}}),
\end{eqnarray*}
so from \eqref{kernel} we get
\[
\dim_{\C} (\bar f)/\bar{(f^{2}})=\dim_{\C} {\rm Ker }\;\sigma'=\dim_{\C} {\rm Ker }\;\sigma \cap (\bar f)\leq 
\dim_{\C} {\rm Ker }\;\sigma=\tau(\F,\B_{0}).
\]

After Corollary \ref{coro:BS} we have $\dim_{\C}(\bar f)/(\bar f^2)=\dim_{\C}\frak T /J(\F)$ and by \eqref{diferencia} we conclude $\mu_p(\F)\leq 2 \tau_p(\F,\B_{0}).$ Finally $\mu_p(\F)=2 \tau_p(\F,\B_{0})$ if and only if ${\rm Ker }\;\sigma \cap (\bar f)={\rm Ker }\;\sigma$, so ${\rm Ker }\;\sigma \subseteq (\bar f)$. We conclude the proof since $\sigma(\bar f)=\bar 0$.
\end{proof}

The {\it intersection multiplicity} of two curves $C:f(x,y)=0$ and $D:g(x,y)=0$ at the point $p$ is by definition $i_p(C,D)=\dim_{\C}\C\{x,y\}/(f,g)$ where $(f,g)$ denotes the ideal of $\C\{x,y\}$ generated by the power series $f$ and $g$.\\

The {\it polar curve} of the singular foliation $\F:\omega=P(x,y)dx+Q(x,y)dy=0 $  at $(\C^2,p)$ with respect to a point $(a:b)$ of the complex projective line $\mathbb P^1(\mathbb C)$ is the analytic  curve $\mathcal{P}^{\F}_{(a:b)}: aP(x,y)+bQ(x,y)=0$. There exists an open Zariski set $U$ of $\mathbb P^1(\mathbb C)$ such that  $\{aP(x,y)+bQ(x,y)=0\;:\; (a:b)\in U\}$ is an equisingular family of plane curves. Any element of this set is called {\it generic polar curve} of the foliation $\F$ and we will denote it by $\mathcal{P}^{\F}$.

A germ of plane curve $C:f(x,y)=0$ of multiplicity $n$ is a {\it semi-homogenous function} at $p$ if and only if $f=f_n+g$ where $f_n$ is a homogeneous polinomial of degree $n$ defining an isolated singularity  at $p$ and $g$ consists of terms of degree at least $n+1$.

\begin{secondtheorem}\label{cota}
Let $\F$ be a singular holomorphic foliation of second type at $(\C^2,p)$. Let $\B=\B_0-\B_{\infty}$ be a balanced divisor of separatrices for $\F$. Then 
\begin{equation}\label{eq:cota}
\frac{(\nu_p(\B_0)-1)^2+\nu_p(\B_{\infty})-i_p(\mathcal{P}^{\F},\B_{\infty})-i_p(\B_0,\B_{\infty})}{2} \stackrel{(*)}\leq \frac{\mu_p(\F)}{2} \leq \tau_p(\F,\B_0),
\end{equation} 
and the equality $(*)$ holds if $\F$ is a  generalized curve foliation and $\B_0$ is defined by a germ of semi-homogeneous function at $p$. 
Moreover, if $\B_{\infty}=\emptyset$, then 
\[\frac{\nu_p(\F)^2}{2}\leq \frac{\mu_p(\F)}{2} \leq \tau_p(\F,\B_0).\]
\end{secondtheorem}
\begin{proof}
By \cite[Proposition 4.2]{FP-GB-SM}, for any singular foliation $\F$ we have  
\begin{eqnarray}\label{eq:1}
    \Delta_p(\F,\B_0)= i_p(\mathcal{P}^{\F},\B_0)+i_p(\B_0,\B_{\infty})-\mu_p(\B_0)-\nu_p(\B_0)+1,
\end{eqnarray}

\noindent where $\Delta_p(\F,\B_0)$ is the excess polar number of $\F$
with respect to $\B_0$.
Since $\F$ is of second type, $\nu_p(\F)=\nu_p(\B)-1=\nu_p(\B_0)-\nu_p(\B_\infty)-1$ by equation (\ref{genzmer:1}), and therefore, from \eqref{eq:1} we get  
\begin{eqnarray}\label{eq:2}
    \Delta_p(\F,\B_0)= i_p(\mathcal{P}^{\F},\B_0)+i_p(\B_0,\B_{\infty})-\mu_p(\B_0)-\nu_p(\F)-\nu_p(\B_\infty).
\end{eqnarray}

On the other hand, after \cite[Theorem A]{Genzmer-Mol}
 we know that $\Delta_p(\F,\B_0)\geq 0$, and equals zero if and only if $\F$ is a  generalized curve foliation. Hence  from \eqref{eq:2} we have
\begin{eqnarray}\label{eq:3}
\mu_p(\B_0)\leq i_p(\mathcal{P}^{\F},\B_0)+i_p(\B_0,\B_{\infty})-\nu_p(\F)-\nu_p(\B_\infty).
\end{eqnarray}
Now, by applying  
\cite[Lemma 4.4]{FP-GB-SM} to $\F$, which is of second type, and by properties of the intersection multiplicity one gets
\begin{eqnarray} \label{eq:4}
i_p(\mathcal{P}^{\F},\B_0)=i_p(\mathcal{P}^{\F},\B_\infty)+\mu_p(\F)+\nu_p(\F),
\end{eqnarray}
so from \eqref{eq:3} and  \eqref{eq:4},   
\begin{eqnarray}\label{eq:5}
\mu_p(\B_0)\leq\mu_p(\F)+i_p(\B_0,\B_{\infty})+i_p(\mathcal{P}^{\F},\B_\infty)-\nu_p(\B_\infty).
\end{eqnarray}
It follows from the definition of the Milnor number, the properties of the intersection multiplicity and \eqref{eq:5} that 
\begin{eqnarray}
(\nu_p(\B_0)-1)^2\leq \mu_p(\B_0)\leq\mu_p(\F)+i_p(\B_0,\B_{\infty})+i_p(\mathcal{P}^{\F},\B_\infty)-\nu_p(\B_\infty).
\end{eqnarray} 
Observe that the first  inequality becomes an equality when $B_0$ is defined by a germ of semi-homogeneous function at $p$ (see \cite{Teissier-Cargese}) and the second inequality is an equality if and only if $\F$ is a generalized curve foliation. 
Finally, the proof ends, up applying Proposition \ref{Liu_fol} 
\begin{equation}
(\nu_p(\B_0)-1)^2+\nu_p(\B_\infty)-i_p(\B_0,\B_{\infty})-i_p(\mathcal{P}^{\F},\B_\infty)\leq \mu_p(\F)\leq 2\tau_p(\F,\B_0).
\end{equation}
\end{proof}

\begin{example}\label{radial}
We illustrate Theorem \ref{cota} with the radial foliation $\F$ given by the $1$-form $\omega=xdy-ydx$. In this case we consider $\B_0=xy(x-y)$ and $\B_{\infty}=x+y.$ We get
$\nu_0(\B_0)=3$, $1=\nu_0(\B_{\infty})=
i_0(\mathcal{P}^{\F},\B_{\infty})=\tau_0(\F,\B_0)$ and  $i_0(\B_0,\B_{\infty})=3$. Hence $\F$ verifies \eqref{eq:cota}.
\end{example}

\begin{remark}
The family of foliations given in \cite[Example 6.5]{FP-GB-SM} are defined by the $1$-form
\[\omega_k=y(2x^{2k-2}+2(\lambda+1)x^2y^{k-2}-y^{k-1})dx+x(y^{k-1}-(\lambda+1)x^2y^{k-2}-x^{2k-2})dy\]

\noindent is a family of dicritical foliations  which are not of second type,   $\B=B_1+B_2$ is an effective balanced divisor of separatrices  for $\F_k$. We get $\nu_0(\F_k)=k$ and $\tau_0(\F_k,\B)=3k-2$. Hence the inequality \[\frac{\nu_p(\F)^2}{2}\leq\tau_p(\F,\B)\]

\noindent fails for all $k\geq 6$. Therefore, in Theorem \ref{cota} the second type hypothesis over $\F$ is essential.

\end{remark}

\section{A lower bound for the global Tjurina number of an algebraic curve}\label{global_tjurina}

\par Let $C$ be a reduced curve of degree $\deg(C)$ in the complex projective plane $\mathbb{P}^2$. Denote by $\tau(C)$ the \textit{global Tjurina number} of the curve $C$, which is the sum of the Tjurina numbers at the singular points of $C$. In this section, under some conditions, we give a lower bound for $\tau(C)$.

\par A holomorphic foliation $\F$ on $\mathbb{P}^{2}$ of degree $d\geq 0$ is a foliation defined by a polynomial 1-form $\Omega=A(x,y,z)dx+B(x,y,z)dy+C(x,y,z)dz,$ where 
$A,B,C$ are complex homogeneous polynomials of degree $d+1$, satisfying two conditions:
\begin{enumerate}
    \item the integrability condition $\Omega\wedge d\Omega =0$,
    \item the Euler condition $Ax+By+Cz=0$.
\end{enumerate}
An algebraic curve $C:f(x,y,z)=0$ is $\F$-invariant if $\Omega\wedge df=f\Theta,$
where $\Theta$ is some polynomial 2-form. 
\par Denote by $\left\lceil z \right\rceil$ the ceiling function evaluated at $z\in \R$, that is, the smallest integer that is greater than or equal to $z\in \R$. We have:

\begin{theorem}\label{teo_global}
Let $\F$ be a holomorphic foliation on $\mathbb{P}^2$ of degree $d$. Suppose that all points $p\in Sing(\F)$ are of second type.
Then
\begin{equation}\label{equa_1}
\left\lceil \frac{d^2+d+1-2\sum_{p\in Sing(\F)}GSV_p(\F,(F_p)_0)}{2}\right\rceil \leq \sum_{p\in Sing(\F)}\tau_p((F_p)_0),    
\end{equation}
where $(F_p)_0$ is the zero  divisor of a balanced equation of separatrices $F_p$ for $\F$ at $p$. In particular, if $C$ is an $\F$-invariant reduced curve in $\mathbb{P}^2$ such that $Sing(\F)\subset C$ and for all $p\in Sing(\F)$, the germ of $C$ at $p$ defines the zero divisor of a balanced equation of separatrices for $\F$ at $p$, then
\begin{equation}\label{equa_2}
    \left\lceil\frac{d^2+d+1-2(d+2)\deg(C)+2\deg(C)^2}{2}\right\rceil\leq \tau(C),
\end{equation}
\end{theorem}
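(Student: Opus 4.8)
The plan is to pass from the local comparison of Theorem \ref{cota} to a global statement on $\mathbb{P}^2$ by summing over the singular locus and feeding in two classical global invariants: the total Milnor number of a degree $d$ foliation and Brunella's residue formula for the $GSV$ index along an invariant curve.

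First I would argue pointwise. Fix $p\in Sing(\F)$; since $\F$ is of second type at $p$, Proposition \ref{Liu_fol} (equivalently the right-hand inequality of Theorem \ref{cota}) gives $\mu_p(\F)\le 2\,\tau_p((F_p)_0)$. Summing over all $p\in Sing(\F)$ and using the classical formula $\sum_{p\in Sing(\F)}\mu_p(\F)=d^2+d+1$ for a foliation of degree $d$ on $\mathbb{P}^2$, I obtain $d^2+d+1\le 2\sum_{p\in Sing(\F)}\tau_p((F_p)_0)$. To introduce the $GSV$ correction I would use that each local index is non-negative, $GSV_p(\F,(F_p)_0)\ge 0$, for a second type foliation; this non-negativity holds for foliations of second type and is closely tied to the excess-polar inequality $\Delta_p(\F,\B_0)\ge 0$ of \cite{Genzmer-Mol} already invoked in the proof of Theorem \ref{cota}, with equality exactly in the generalized curve case. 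Hence $d^2+d+1-2\sum_p GSV_p(\F,(F_p)_0)\le d^2+d+1\le 2\sum_p\tau_p((F_p)_0)$; dividing by $2$ and noting that the right-hand side is an integer (so that $x\le n\in\Z$ forces $\lceil x\rceil\le n$) yields \eqref{equa_1}.

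For the second assertion I would specialize to the global curve $C$. Under the hypotheses $Sing(\F)\subset C$ and that the germ of $C$ at each $p$ is the zero divisor of a balanced equation of separatrices, the local contributions assemble to $\sum_{p\in Sing(\F)}\tau_p((F_p)_0)=\tau(C)$. It then remains to evaluate the global sum of $GSV$ indices: Brunella's index theorem gives $\sum_{p}GSV_p(\F,C)=N_{\F}\cdot C-C\cdot C$, and a degree $d$ foliation on $\mathbb{P}^2$ has normal bundle $N_{\F}=\mathcal{O}_{\mathbb{P}^2}(d+2)$, whence $\sum_p GSV_p(\F,C)=(d+2)\deg(C)-\deg(C)^2$. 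Substituting this into \eqref{equa_1} and expanding the numerator to $d^2+d+1-2(d+2)\deg(C)+2\deg(C)^2$ produces \eqref{equa_2}.

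The routine ingredients are the summation over $Sing(\F)$ and the ceiling manipulation; the genuine content is imported. The main obstacle I anticipate is marshalling the correct global index data — the total Milnor number $d^2+d+1$, and above all the precise statement and sign in Brunella's $GSV$ residue formula together with the value $N_{\F}=\mathcal{O}_{\mathbb{P}^2}(d+2)$ — and securing the non-negativity of the local $GSV$ index for second type foliations, since it is exactly this positivity that lets the $GSV$ term only weaken the bound while preserving $\mu_p(\F)\le 2\,\tau_p((F_p)_0)$ as the driving estimate.
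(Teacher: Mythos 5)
Your global bookkeeping (the total Milnor number $d^2+d+1$ and Brunella's formula $\sum_p GSV_p(\F,C)=(d+2)\deg(C)-\deg(C)^2$) matches the paper, but the pointwise step driving the whole argument has a genuine gap. Theorem \ref{cota} (equivalently Proposition \ref{Liu_fol}) gives $\mu_p(\F)\le 2\,\tau_p(\F,(F_p)_0)$, where $\tau_p(\F,(F_p)_0)=\dim_\C\C[[x,y]]/(f,P,Q)$ is the Tjurina number of the \emph{foliation} with respect to the curve; you quote it as $\mu_p(\F)\le 2\,\tau_p((F_p)_0)$ with the ordinary Tjurina number $\dim_\C\C[[x,y]]/(f,f_x,f_y)$ of the curve on the right-hand side. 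These are different invariants, and the comparison goes the wrong way for you: by \cite[Proposition 6.2]{FP-GB-SM} one has $\tau_p(\F,(F_p)_0)=GSV_p(\F,(F_p)_0)+\tau_p((F_p)_0)$, so whenever $GSV_p(\F,(F_p)_0)\ge 0$ we get $\tau_p(\F,(F_p)_0)\ge\tau_p((F_p)_0)$, and your claimed inequality is strictly \emph{stronger} than what the cited results provide rather than a consequence of them.

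That identity is also the correct mechanism by which the $GSV$ term enters \eqref{equa_1}: substituting it into $\mu_p(\F)\le 2\,\tau_p(\F,(F_p)_0)$ yields directly $\mu_p(\F)-2\,GSV_p(\F,(F_p)_0)\le 2\,\tau_p((F_p)_0)$, which is then summed over $Sing(\F)$ and combined with $\sum_p\mu_p(\F)=d^2+d+1$ and the ceiling observation. In your version the $GSV$ term is instead discarded as nonnegative slack, which (i) leaves you needing the unproven stronger pointwise bound above to control the right-hand side, and (ii) rests on a non-negativity claim for $GSV_p(\F,(F_p)_0)$ that you do not establish and that the paper's proof never needs. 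The remainder of your argument --- assembling $\sum_p\tau_p((F_p)_0)=\tau(C)$ under the stated hypotheses and evaluating $\sum_p GSV_p(\F,C)$ by Brunella's index theorem with $N_{\F}=\mathcal{O}_{\mathbb{P}^2}(d+2)$ --- coincides with the paper's and is fine once \eqref{equa_1} is correctly derived.
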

\begin{proof}
Since all points $p\in Sing(\F)$ are of second type, then
\begin{equation}\label{eq_B}
\mu_p(\F)\leq 2\tau_p(\F,(F_p)_0)
\end{equation} by Theorem \ref{cota}. According to \cite[Proposition 6.2 ]{FP-GB-SM}, we have $\tau_p(\F,(F_p)_0)=GSV_p(\F,(F_p)_0)+\tau_p((F_p)_0)$. Hence, up substituting in (\ref{eq_B}), we obtain 
\[\frac{\mu_p(\F)-2 GSV_p(\F,(F_p)_0)}{2}\leq \tau_p((F_p)_0),\,\,\,\,\,\,\text{for all}\,\,\,p\in Sing(\F).\]
The inequality (\ref{equa_1}) is proved by taking sum over all singular points of $\F$, by using $\sum_{p\in Sing(\F)}\mu_p(\F)=d^2+d+1$ (see \cite[Page 28]{Brunella-libro}) and considering the ceiling function. The inequality (\ref{equa_2}) follows from $$\sum_{p\in Sing(\F)\cap C} GSV_p(\F,C)=(d+2)\deg(C)-\deg(C)^2$$ given in \cite[Proposition 4]{index} and considering again the ceiling function.
\end{proof}
\par The following example illustrates Theorem \ref{teo_global}.
\begin{example}
For each $\lambda\in\mathbb{C}$, we consider the 1-form
\[\omega_\lambda=yzdx+\lambda xz dy-(\lambda+1)xy dz,\] which defines a foliation $\F_{\lambda}$ on $\mathbb{P}^2$ of degree one. The curve $C:$ $xyz=0$ has degree three and it satisfies all hypotheses of Theorem \ref{teo_global}. Then 
\[\left \lceil\frac{1^2+1+1-2(1+2)3+2\cdot 3^2}{2}\right \rceil=\left \lceil\frac{3}{2}\right \rceil=2\leq \tau(C)=3,\]
which implies that the inequality (\ref{equa_2}) of Theorem \ref{teo_global} is verified. Observed that we equate the bound given by du Plessis and Wall in \cite[Theorem 3.2]{duPlessis-Wall}.
\end{example}

\textit{Acknowledgement.} The first-named author thanks Universidad de La Laguna for the hospitality during his visit in June 2022.

\end{document}